\title{Estimates of eigenvalues of Schr\"{o}dinger operators on the half-line with complex-valued potentials}
\author{
Alexandra Enblom\\
{\small Department of Mathematics}\\
{\small Linköping University}\\
{\small SE-581 83 Link\"oping, Sweden}\\
{\small \texttt{alexandra.enblom@liu.se}}\\
}
\theoremstyle{plain}
\newtheorem{thm}{Theorem}[section]
\newtheorem{cor}[thm]{Corollary}
\theoremstyle{definition}
\newtheorem{rem}[thm]{Remark}
\newcommand{\reals}{\ensuremath{\mathbb{R}}}
\newcommand{\complex}{\ensuremath{\mathbb{C}}}
\DeclareMathOperator{\impart}{Im}
\newcommand{\ud}{\,d}
\numberwithin{equation}{section}
\date{}
\begin{document}
\maketitle

\begin{abstract}
Estimates for eigenvalues of  Schr\"{o}dinger operators on the half-line with  complex-valued potentials are established.  Schr\"{o}dinger operators with potentials  belonging to weak Lebesque's classes  are also considered. The  results  cover those known previously  due to R. L. Frank, A. Laptev and R. Seiringer [In spectral theory and analysis, vol. 214, Oper. Theory Adv. Appl., pag. 39-44; Birkh\"{a}user/Springer Basel.]
\end{abstract}

 \textbf{Keywords}. Schr\"{o}dinger operators; complex potentials; estimation of eigenvalues.

\textbf{2010 AMS Subject Classification}. Primary 47E05; Secondary 35P15; 81Q12.

\section{Introduction}\label{sec:introduction}
 This paper is motivated by the recent  work \cite{frank-laptev-seiringer} in which estimates for  non-positive eigenvalues of Schr\"{o}dinger operators on the half-line are given.  The estimates obtained in \cite{frank-laptev-seiringer} revised  a well-known result, it is mentioned in  \cite{keller}, according to which any negative eigenvalue $\lambda$ of the (self-adjoint) Schr\"{o}dinger operator $H  (= - \ud^{2}/ \ud x^{2} + q (x))$ satisfies 
\begin{equation}  \label{eq:lambda1}
| \lambda |^{1/2}  \leq \frac{1}{2} \int_{- \infty}^{\infty} | q (x) | \ud x.
\end{equation}
This result, as was pointed out in \cite{abramov-aslanyan-davies}, remains valid for the case of  non-self-adjoint Schr\"{o}dinger operators  as well. In \cite{abramov-aslanyan-davies} it is proved \eqref{eq:lambda1}  provided that the potential $q$, being  in general a complex-valued function, belongs to $L_{1} (\reals) \cap  L_{2} (\reals).$  The mentioned  paper \cite{frank-laptev-seiringer} concerns Schr\"{o}dinger operator on $ L_{2} (\reals_{+})$ assigned with  Dirichlet (or also Neumann) boundary  conditions. Assuming that $q$ is a  summable (in general, complex-valued)  function, in \cite{frank-laptev-seiringer} instead of \eqref{eq:lambda1}  it is proved that for any (non-positive)  eigenvalue  $\lambda = | \lambda | e^{i \theta}$ $(0<\theta < 2\pi)$ of the Schr\"{o}dinger operator $H$  in $ L_{2} (\reals_{+})$ (for instance, with Dirichlet boundary condition) satisfies 
\begin{equation} \label{eq:lambda2}
| \lambda |^{1/2}  \leq \frac{1}{2}  g (\cot (\theta/2))  \int_{0}^{\infty} | q (x) | \ud x,
\end{equation}
where $g (t) : = \sup_{y \geq 0} | e^{i t y} - e^{- y} |.$

In contrast with \cite{frank-laptev-seiringer} we study the problem for the general case of  the  Schr\"{o}dinger operator $H$ considered  acting in the Banach space $L_{p} (\reals_{+})  (1 < p < \infty)$   by  assuming  that the potential $q$ admits a factorization $q = a b,$ where $a \in  L_{r} (\reals_{+})$ and $b \in L_{s} (\reals_{+})$  for some $r, s, 0 < r, s \leq \infty.$ Under additional subordinate type conditions on  the potential $q$, in order to guarantee the relatively compactness of $q$ viewed as a  perturbation of $- \ud^{2}/ \ud x^{2},$  the  Schr\"{o}dinger operator $H$  is defined as a natural  closed extension of $- \ud^{2}/ \ud x^{2} + q$  in  $ L_{p} (\reals_{+})$  having its essential spectrum (as the  unperturbed operator)  the semi-axis $\reals_{+}.$ In  this framework the problem reduces to estimation of resolvent of the unperturbed operator bordered by adequate operators of  multiplications. First we analyze  the situation of Dirichlet  boundary conditions,  and then we show that the arguments  applied are available for the general  case when mixed boundary conditions  (in particular, for Neumann boundary  conditions)  are imposed. We prove  that if $0 < r \leq \infty, p \leq s \leq \infty, r^{- 1} + s^{- 1} < 1,$ then for eigenvalues $\lambda$   lying out of  $\reals_{+}$ (the essential spectrum) there holds 
\begin{equation} \label{eq:lambda3}
| \lambda |^{1 + \alpha}  \leq    (\alpha \sin  (\theta/2))^{- 2}   \| a \|_{r}^{2 \alpha}   \| b \|_{r}^{2 \alpha},
\end{equation}
where $\alpha : = (1 - r^{- 1}  - s^{- 1})^{- 1};$  as above $\theta  = arg \lambda  \quad (0 < \theta < 2 \pi).$      In the extremal case $\alpha = \infty$ (i.e., $r^{- 1} + s^{- 1}  = 1$)  the following estimate 
\begin{equation} \label{eq:lambda4}
| \lambda |^{1/2}   \leq \frac{1}{2}  g (\cot (\theta/2))  \| a \|_{p}   \| b \|_{p^{'}}
\end{equation}
holds true, where $g$ is determined as in \eqref{eq:lambda2}; $p^{'} (= p/(p - 1))$  denotes for the conjugate exponent of  $p$. Clearly, if $a, b$ are taken as $| a | = | b | = | q |^{1/2},$  for  the  case $p = 2$, \eqref{eq:lambda4} leads to \eqref{eq:lambda2} that,   as was  already mentioned, is due to R.L.Frank, A.Laptev and R.Seiringer \cite{frank-laptev-seiringer}.

Diverse estimates useful  in applications can be  derived from the general results mentioned above.  So, letting $q =   L_{\gamma + 1/2} (\reals_{+}) $ for $\gamma > 1/2$  if $1 < p \leq 2$  and $2 \gamma > p - 1$ if $p > 2,$ the eigenvalues $\lambda = | \lambda | e^{i \theta}  (0 < \theta < 2 \pi)$  of $H$ are confined according  to the following  estimate
\begin{equation} \label{eq:lambda5}
| \lambda |^{\gamma}  \leq  \biggl( \frac{2 \gamma + 1}{2\gamma - 1}  \sin \frac{\theta}{2} \biggl)^{1/2 - \gamma}    \int_{0}^{\infty} | q (x) |^{\gamma + 1/2} \ud x.
\end{equation}
For the case of self-adjoint  Schr\"{o}dinger operators  considered on the whole line a similar  inequality to \eqref{eq:lambda5} was pointed out by E.H.Lieb and W.Thirring \cite{lieb-thirring} (cf. also the discussion undertaken in this context in \cite{frank-laptev-seiringer}; see Remark 1.6 \cite{frank-laptev-seiringer}). 

Estimates involving  decaying potentials can also be derived directly from the general results. So, if  it is taken $a (x) = (1 + x)^{- \tau}$ and  $b (x) = (1 + x)^{\tau} q (x)$ by assuming that $(1 + x)^{\tau} q \in  L_{r} (\reals_{+})$ with  $\tau r > 1,$ then the eigenvalues $\lambda$ (with $\theta = arg \lambda,  0 < \theta < 2 \pi$)  of $H$ satisfy
$$| \lambda |^{r - 1}  \leq  \frac{1}{\tau r - 1}  \biggl( \frac{r}{r - 2}  \sin \frac{\theta}{2} \biggl)^{2 - r}    \int_{0}^{\infty} |  (1 + x)^{r} q (x) |^{r} \ud x.$$
 It stands to reason that other weight functions like, for instance, $e^{\tau | x |^{\alpha}}$ with $\tau > 0, \alpha \in  \reals,$ can be also involved.

Finally, note that the arguments by  interplaying with interpolation methods \cite{bergh-lofstrom} extend the obtained results to more general  case of Schr\"{o}dinger operators  with potentials  belonging to weak Lebesgue's  spaces. A version of \eqref{eq:lambda5} for this case  is the following  one
$$| \lambda |^{\gamma}  \leq  C \sup_{t >0} (t^{\gamma + 1/2}  \lambda_{q} (t))$$
with $\gamma$ as in \eqref{eq:lambda5}; $\lambda_{q}$  denotes the distribution function of the potential $q$ with respect to the standard Lebesgue measure on $\reals_{+}.$

The paper is organized as follows. In Section \ref{sec:lebesgue} the problem is discussed  for Schr\"{o}dinger operators with Lebesgue power-summable potentials. Section \ref{sec:lebesgue} is divided in two subsections. The first is concerned with Schr\"{o}dinger operators  with the Dirichlet boundary  conditions. In the second one we discuss the general situation when the mixed boundary conditions are imposed.
In Section \ref{sec:potentials from weak} we treat the case of  potentials belonging to weak Lebesgue's  type spaces.

\section{Lebesgue summable type potentials}  \label{sec:lebesgue}

We consider the Schr\"{o}dinger operator $H$ defined  in the space $L_p(\reals_{+})  (1 < p < \infty)$ as a  closed extension of the formal differential operator $-\ud^{2}/\ud x^{2} + q (x).$   For it should  be  posed suitable conditions on the potential $q$ (in an averaged sense to be small at infinity) ensuring  the relatively compactness of $q$ regarded as a  perturbation operator. We assume that $q$ admits a factorization $q = a b$ with $a \in L_{r}(\reals_{+})$ and $b \in L_{s}(\reals_{+})$ for some  $0 < r, s \leq \infty.$  
 Further conditions on the potential $q$ under which the main results are obtained ensured that the essential spectrum of $H$ is the same in each of Banach space $L_p(\reals_{+})$ with $1 < p < \infty,$ and filling  the semi-axis $\reals_{+}$.  
 To this  end we restrict ourselves to refer \cite{schechter3}  for details and other  diverse related   conditions concerning general elliptic  differential operators. In this framework the problem of evaluation for eigenvalues (lying  outside of the essential spectrum) of $H$ reduces to norm estimation of the resolvent  of the unperturbed operator bordered  by adequate operators of multiplication  as is described below.

1. We first consider Dirichlet boundary condition case. The unperturbed operator $H_{0}= - \ud^{2} / \ud x^{2}$ is taken with the domain the Sobolev space $W_{p}^{2}(\reals_{+})$ consisting of all functions $u\in L_{p}(\reals_{+})$ such that $u$, $u'$ are absolutely continuous with $u''\in L_{p}(\reals_{+})$ and $u(0)=0$. The operator $H_{0}$ is closed and $\sigma (H_{0}) = [ 0, \infty)$. For any $\lambda \in \complex \setminus [0, \infty)$ the resolvent $R(\lambda; H_{0}) = (H_{0} - \lambda I)^{-1}$ is the integral operator
\begin{equation} \label{eq:resolvent}
R(\lambda; H_{0})v(x) = - \frac{1}{2 i \mu} \int_{0}^{\infty} e^{i \mu |x- y|} v(y) \ud y + \frac{1}{2 i \mu}\int_{0}^{\infty} e^{i \mu (x+y)} v(y)\ud y,
\end{equation}
where $\mu=\lambda^{1/2}$ is chosen so that $\impart \mu >0$.

We denote by $A, B$ the operators of multiplication by $a, b$, respectively, and evaluate the norm of the bordered resolvent $BR(\lambda; H_{0})A$. For we choose $\beta>0$ and $\gamma>0$ such that the evaluations 
\begin{equation} \label{eq:norm a}
\| au \|_{\beta}\leq \| a \|_{r} \| u \|_{p}, \quad \beta^{-1} = r^{-1} + p^{-1},
\end{equation}
and 
\begin{equation} \label{eq:norm b}
\| bv\|_{p} \leq \| b\|_{s}\| v \|_{\gamma}, \quad p^{-1}= s^{-1}+\gamma^{-1}
\end{equation}
hold true (those can be obtained by the use of H\"older inequality). In this way, the operator is bounded viewed as an operator acting from $L_{p}(\reals_{+})$ to $L_{\beta}(\reals_{+})$ and, respectively, $B$ as a bounded operator from $L_{\gamma}(\reals_{+})$ to $L_{p}(\reals_{+})$.

Next, we let
$$k(x, y; \lambda) = - \frac{1}{2 i \mu} ( e^{i \mu |x-y|} - e^{i \mu (x+y)}), \quad 0<x, y < \infty ,$$
for the kernel of the resolvent $R(\lambda; H_{0})$ and proceed as follows. First, we take an $\alpha$, $1\leq \alpha < \infty$, and observe that 
$$\sup_{0<x<\infty} \| k(x, \cdot ; \lambda)\|_{\alpha} \leq 1/|\mu| (\alpha \impart \mu)^{1/\alpha}.$$

 In fact, for any $x$, $0 < x < \infty$, we have

$$ \int_{0}^{\infty} | e^{i \mu | x - y|} |^{\alpha}  \ud y =   
\int_{0}^{x} e^{- \alpha (\impart \mu) (x - y)}  \ud y  + \int_{x}^{\infty} e^{- \alpha (\impart \mu) (- x + y)}  \ud y  =$$

$$ =   \frac{1}{\alpha \impart \mu}  \left( 2 -  e^{- \alpha (\impart \mu) x} \right),$$
and
$$ \int_{0}^{\infty} | e^{i \mu (x + y)} |^{\alpha} \ud y =   \int_{0}^{\infty} e^{- \alpha (\impart \mu) ( x + y)}  \ud y 
 =   \frac{1}{\alpha \impart \mu}   e^{- \alpha (\impart \mu) x},$$

hence

$$\| k (x, \cdot; \lambda) \|_{\alpha}  = 
 \left( \int_{0}^{\infty}    \left| \frac{1}{2 i \mu}  \left(  e^{i \mu | x - y |}  -  e^{i \mu (x + y)} \right)     \right|^{\alpha}  \ud y \right)^{1/\alpha} \leq$$

$$\leq   \frac{1}{2 | \mu |}   \left( \left( \int_{0}^{\infty}  |  e^{i \mu | x - y |} |^{\alpha} \ud y   \right)^{1/\alpha} +   \left( \int_{0}^{\infty}  |  e^{i \mu ( x + y )} |^{\alpha} \ud y   \right)^{1/\alpha}   \right)  =$$

$$= \frac{1}{2 | \mu |}  \left( \left( \frac{1}{2 \impart \mu}  (2 -  e^{- \alpha (\impart \mu) x} )   \right)^{1/\alpha}  +  \left( \frac{1}{2 \impart \mu} e^{- \alpha (\impart \mu) x}   \right)^{1/\alpha}     \right) =$$

$$= \frac{1}{2 | \mu | (\alpha \impart \mu)^{ 1/\alpha}}  \left( (2 -  e^{- \alpha (\impart \mu) x} )^{1/\alpha} + e^{- (\impart \mu) x}  \right),$$
i.e.,
$$\| k (x, \cdot; \lambda) \|_{\alpha} \leq \frac{1}{2 | \mu | (\alpha \impart \mu)^{ 1/\alpha}}  \left( (2 -  e^{- \alpha (\impart \mu) x} )^{1/\alpha} + e^{- (\impart \mu) x}  \right).$$ 
 The optimal value of the right member for varying $x$, $0 < x < \infty$, is equal to $1/| \mu | (\alpha \impart \mu)^{1/\alpha}$, and, thus, the desired inequality follows.

By Minkowski's inequality, it follows
$$\| R(\lambda ; H_{0}) v \|_{\alpha} = \left( \int_{0}^{\infty} \left| \int_{0}^{\infty} k(x,y;\lambda)v(y)\ud y \right|^{\alpha}\ud x \right)^{1/\alpha} \leq$$
$$\leq \int_{0}^{\infty} \left( \int_{0}^{\infty} |k(x,y; \lambda)|^{\alpha} \ud x \right)^{1/\alpha} |v(y)| \ud y \leq \sup_{0<y<\infty} \| k(\cdot, y; \lambda)\|_{\alpha} \| v\|_{1},$$
and, since the variables in the kernel $k(x, y; \lambda)$ are equal right, one has 
\begin{equation} \label{eq:resker}
\| R(\lambda ; H_{0}) v \|_{\alpha} \leq (1/ | \mu | (\alpha \impart \mu)^{1/ \alpha}) \| v\|_{1}.
\end{equation}
On the other hand, by H\"older's inequality, there holds
$$|R(\lambda; H_{0})v(x)| = \left| \int_{0}^{\infty} k(x, y; \lambda) v(y) \ud y \right| \leq$$
$$\left( \int_{0}^{\infty} | k(x, y ; \lambda)|^{\alpha} \ud y \right)^{1/\alpha} \left( \int_{0}^{\infty} |v(y)|^{\alpha'} \ud y \right)^{1/\alpha'} = \| k(x, \cdot; \lambda)\|_{\alpha} \| v\|_{\alpha'},$$
that yields that 
\begin{equation} \label{eq:resker1}
\| R(\lambda; H_{0}) v\|_{\infty} \leq (1/ |\mu| (\alpha \impart \mu)^{1/\alpha}) \| v \|_{\alpha'}.
\end{equation}

The evaluation \eqref{eq:resker} means that the resolvent operator $R(\lambda; H_{0})$ is bounded regarded as an operator from $L_{1}(\reals_{+})$ to $L_{\alpha}(\reals_{+})$ while \eqref{eq:resker1} means the boundedness of $R(\lambda; H_{0})$ as an operator from $L_{\alpha'}(\reals_{+})$ to $L_{\infty}(\reals_{+})$. In both cases its norm is bounded by $1/ |\mu| (\alpha \impart \mu)^{1/\alpha}$. By applying the Riesz-Thorin interpolation theorem (see, for instance, \cite{bergh-lofstrom}; Theorem 1.1.1) we conclude that the resolvent operator $R(\lambda; H_{0})$ is bounded from $L_{\beta}(\reals_{+})$ to $L_{\gamma}(\reals_{+})$ provided that 
$$\frac{1}{\beta} = \frac{1-\theta}{1} + \frac{\theta}{\alpha'}, \quad \frac{1}{\gamma} = \frac{1-\theta}{\alpha} + \frac{0}{\infty}, \quad 0<\theta <1.$$
Moreover, the corresponding value of its norm does not exceed $1/ |\mu | (\alpha \impart \mu)^{1/\alpha}$. Eliminating $\theta$, we find
$$\alpha^{-1} + \beta^{-1} = \gamma^{-1} + 1,$$
which, in view of restriction in \eqref{eq:norm a} and \eqref{eq:norm b}, implies $\alpha = (1-r^{-1} - s^{-1})^{-1}$. Note that due to the fact that $1\leq \alpha < \infty$ it must be $0\leq r^{-1}+s^{-1}<1$. In these conditions we obtain the following estimate
$$\| BR(\lambda; H_{0}) Au \|_{p} \leq (1/ |\mu| (\alpha \impart \mu)^{1/\alpha}) \| a \|_{r} \| b \|_{s} \| u \|_{p},$$
and, therefore, for any eigenvalue $\lambda \in \complex \setminus \reals_{+}$ of $H$ it should be fulfilled 
$$|\mu| (\alpha \impart \mu)^{1/\alpha} \leq \| a \|_{r} \| b \|_{s},$$
that, by letting $\lambda = |\lambda| e^{i \theta}$, $0<\theta < 2\pi$, provides to the following estimate
\begin{equation} \label{eq:lambda}
|\lambda |^{1+\alpha} \leq (\alpha \sin (\theta/ 2))^{-2} \| a \|_{r}^{2\alpha} \| b \|_{s}^{2\alpha}.
\end{equation}
We have proved the following result.
\begin{thm} \label{thm:result1}
Let $1<p< \infty$, $0<r \leq \infty$, $p\leq s \leq \infty$, $r^{-1}+s^{-1} < 1$, and assume $q=ab$, where $a \in L_{r}(\reals_{+})$ and $b \in L_{s}(\reals_{+})$. Then, for any eigenvalue $\lambda \in \complex \setminus \reals_{+}$ of the operator $H$, considered acting in $L_{p}(\reals_{+})$, there holds \eqref{eq:lambda}.
\end{thm}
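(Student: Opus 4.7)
The plan is to apply a Birman--Schwinger type reduction. If $\lambda\in\complex\setminus\reals_{+}$ is an eigenvalue of $H=H_0+q$ with eigenfunction $u\in L_p(\reals_+)$, then $(H_0-\lambda)u=-abu$, so $u=-R(\lambda;H_0)(ab)u$ and setting $v=bu$ gives $v=-BR(\lambda;H_0)Av$. Hence the operator norm of $BR(\lambda;H_0)A$ on $L_p(\reals_+)$ must be at least one; the theorem will follow from a matching upper bound on this norm, converted into the stated form via $\mu=|\lambda|^{1/2}e^{i\theta/2}$ with $\impart\mu>0$, which gives $\impart\mu=|\lambda|^{1/2}\sin(\theta/2)$.

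I would factor the bordered resolvent through intermediate Lebesgue spaces. H\"older's inequality gives $\|Au\|_\beta\leq\|a\|_r\|u\|_p$ with $\beta^{-1}=r^{-1}+p^{-1}$ and $\|Bv\|_p\leq\|b\|_s\|v\|_\gamma$ with $p^{-1}=s^{-1}+\gamma^{-1}$, reducing the task to bounding $R(\lambda;H_0):L_\beta\to L_\gamma$. For this I would exploit the explicit kernel $k(x,y;\lambda)=-\frac{1}{2i\mu}(e^{i\mu|x-y|}-e^{i\mu(x+y)})$ and establish, by direct computation and optimization in $x\geq 0$, the sharp estimate $\sup_x\|k(x,\cdot;\lambda)\|_\alpha\leq 1/(|\mu|(\alpha\impart\mu)^{1/\alpha})$ for $1\leq\alpha<\infty$. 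Combining this with Minkowski's inequality, H\"older's inequality, and the symmetry of $k$ in $(x,y)$, I obtain the two endpoint bounds $\|R(\lambda;H_0)\|_{L_1\to L_\alpha}$ and $\|R(\lambda;H_0)\|_{L_{\alpha'}\to L_\infty}$, both dominated by the same constant.

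Riesz--Thorin interpolation between the two endpoints then yields $\|R(\lambda;H_0)\|_{L_\beta\to L_\gamma}\leq 1/(|\mu|(\alpha\impart\mu)^{1/\alpha})$ whenever $\alpha^{-1}+\beta^{-1}=\gamma^{-1}+1$; reconciling this constraint with the H\"older choices forces $\alpha=(1-r^{-1}-s^{-1})^{-1}$, and the hypothesis $r^{-1}+s^{-1}<1$ guarantees $1\leq\alpha<\infty$. Composing everything, $\|BR(\lambda;H_0)A\|_{L_p\to L_p}\leq\|a\|_r\|b\|_s/(|\mu|(\alpha\impart\mu)^{1/\alpha})$, and the necessary condition that this quantity be at least one rearranges into \eqref{eq:lambda} after substituting $\impart\mu=|\lambda|^{1/2}\sin(\theta/2)$. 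The main technical obstacle I anticipate is the sharp kernel estimate itself: showing that the two-term expression $(2-e^{-\alpha(\impart\mu)x})^{1/\alpha}+e^{-(\impart\mu)x}$ has supremum equal to $2$ over $x\geq 0$ is precisely what yields the clean exponent $1/\alpha$ and thus the correct power of $|\lambda|$; the remaining steps amount to careful bookkeeping of H\"older, Minkowski and interpolation.
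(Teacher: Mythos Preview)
Your proposal is correct and follows essentially the same route as the paper: H\"older bounds for the multiplications $A:L_p\to L_\beta$ and $B:L_\gamma\to L_p$, the explicit kernel estimate $\sup_x\|k(x,\cdot;\lambda)\|_\alpha\le 1/(|\mu|(\alpha\impart\mu)^{1/\alpha})$, the two endpoint bounds $L_1\to L_\alpha$ and $L_{\alpha'}\to L_\infty$ via Minkowski and H\"older, Riesz--Thorin interpolation to reach $L_\beta\to L_\gamma$, and then the Birman--Schwinger criterion $\|BR(\lambda;H_0)A\|_{L_p\to L_p}\ge 1$ converted with $\impart\mu=|\lambda|^{1/2}\sin(\theta/2)$. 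Your identification of the key technical point---that $(2-e^{-\alpha(\impart\mu)x})^{1/\alpha}+e^{-(\impart\mu)x}$ attains its supremum $2$ at $x=0$---matches exactly what the paper uses.
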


Diverse estimates useful in applications can be derived from the above general result. If in Theorem \ref{thm:result1} is taken $r=s$, there obtains the following result.
\begin{cor} \label{cor:cor1}
Suppose $q=ab$, where $a, b \in L_{r}(\reals_{+})$ with $r>2$ if $1< p \leq 2$ and $p\leq r \leq \infty$ if $p>2$. Then for any eigenvalue $\lambda \in \complex \setminus \reals_{+}$ of $H$, considered acting in $L_{p}(\reals_{+})$, there holds
\begin{equation} \label{eq:cor1}
|\lambda|^{r-1} \leq \left( \frac{r}{r-2} \sin \frac{\theta}{2} \right)^{2-r} \| a \|_{r}^{r} \| b \|_{r}^{r}.
\end{equation}
\end{cor}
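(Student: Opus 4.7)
The plan is to specialize Theorem \ref{thm:result1} to the case $s = r$ and then perform an elementary algebraic rearrangement of the resulting estimate.

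First I would verify that the hypotheses of Theorem \ref{thm:result1} translate correctly under $s = r$. The constraint $r^{-1} + s^{-1} < 1$ collapses to $r > 2$, while $p \leq s$ becomes $p \leq r$. In the regime $1 < p \leq 2$, the bound $p \leq r$ is automatic from $r > 2$, so it suffices to require $r > 2$; in the regime $p > 2$, the inequality $p \leq r$ is imposed as a standing hypothesis. The remaining condition $0 < r \leq \infty$ is trivial.

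Next I would evaluate the interpolation exponent $\alpha = (1 - r^{-1} - s^{-1})^{-1}$, which under $s = r$ takes the value $\alpha = r/(r-2)$. Consequently $1 + \alpha = 2(r-1)/(r-2)$ and $2\alpha = 2r/(r-2)$, so substitution into \eqref{eq:lambda} yields
$$|\lambda|^{2(r-1)/(r-2)} \leq \left(\frac{r}{r-2}\sin\frac{\theta}{2}\right)^{-2} \|a\|_r^{2r/(r-2)} \|b\|_r^{2r/(r-2)}.$$
Since $r > 2$, the exponent $(r-2)/2$ is strictly positive, and raising both sides of this inequality to the $(r-2)/2$-th power produces precisely \eqref{eq:cor1}. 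The argument contains no genuine analytic obstacle beyond the general result already established; the only thing to be careful about is the bookkeeping of exponents.
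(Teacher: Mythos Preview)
Your proposal is correct and is exactly the paper's approach: the paper simply says ``If in Theorem \ref{thm:result1} is taken $r=s$, there obtains the following result,'' and you have supplied the straightforward exponent bookkeeping that this sentence implicitly invokes.
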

The following particular case presents peculiar interest in many situations.
\begin{cor} \label{cor:cor2}
Let $\gamma > 1/2$ if $1< p \leq 2$ and $2\gamma \geq p-1$ if $p>2$, and suppose 
$$q\in L_{\gamma + 1/2} (\reals_{+}).$$
Then any eigenvalue $\lambda \in \complex \setminus \reals_{+}$ of the operator $H$ in $L_{p}(\reals_{+})$ satisfies 
\begin{equation} \label{eq:cor2}
|\lambda|^{\gamma} \leq \left( \frac{2\gamma + 1}{2 \gamma -1} \sin \frac{\theta}{2}\right)^{1/2 - \gamma} \int_{0}^{\infty} |q(x)|^{\gamma +1/2} \ud x.
\end{equation}
\end{cor}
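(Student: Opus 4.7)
The plan is to reduce Corollary \ref{cor:cor2} to Corollary \ref{cor:cor1} by choosing an optimal polar factorization of the potential. First I would write $q = ab$ with $|a| = |b| = |q|^{1/2}$, for instance by setting $a(x) = |q(x)|^{1/2}$ and $b(x) = |q(x)|^{1/2} \operatorname{sgn} q(x)$, where $\operatorname{sgn} q = q/|q|$ on $\{q \neq 0\}$ and $0$ elsewhere. Then $ab = q$ pointwise, $|a|^r = |b|^r = |q|^{r/2}$, and consequently $\|a\|_r = \|b\|_r = \|q\|_{r/2}^{1/2}$ for every $r \in (0,\infty]$.

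Next I would pick $r = 2\gamma + 1$, which is exactly the choice that makes the exponent $r/2$ on $|q|$ equal to $\gamma + 1/2$. The admissibility conditions on $r$ in Corollary \ref{cor:cor1} then match those imposed in Corollary \ref{cor:cor2}: in the range $1 < p \leq 2$ the hypothesis $r > 2$ becomes $\gamma > 1/2$, while for $p > 2$ the hypothesis $p \leq r$ becomes $2\gamma \geq p - 1$. The assumption $q \in L_{\gamma+1/2}(\reals_+)$ immediately gives $a, b \in L_r(\reals_+)$, so all hypotheses of Corollary \ref{cor:cor1} are met.

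The final step would be to substitute $\|a\|_r^r \|b\|_r^r = \bigl(\int_0^{\infty} |q(x)|^{\gamma+1/2} \ud x\bigr)^2$, together with $r - 1 = 2\gamma$ and $r - 2 = 2\gamma - 1$, into \eqref{eq:cor1}. This produces
$$|\lambda|^{2\gamma} \leq \left( \frac{2\gamma + 1}{2\gamma - 1} \sin \frac{\theta}{2} \right)^{1 - 2\gamma} \left( \int_0^{\infty} |q(x)|^{\gamma + 1/2} \ud x \right)^{2},$$
and extracting square roots yields \eqref{eq:cor2}. I do not anticipate any substantive obstacle: the entire argument is a one-line specialization of Corollary \ref{cor:cor1}, and the only point that deserves a moment's attention is the translation of the parameter ranges, which amounts to elementary arithmetic.
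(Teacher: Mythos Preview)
Your proposal is correct and follows exactly the paper's own argument: the paper's proof is the single sentence ``In Corollary \ref{cor:cor1} it suffices to let $r=2\gamma+1$ and take $a(x)=|q(x)|^{1/2}$ and $b(x)=(\operatorname{sgn} q(x))|q(x)|^{1/2}$,'' and you have simply supplied the surrounding arithmetic and the parameter-range check that the paper leaves implicit.
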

\begin{proof}
In Corollary \ref{cor:cor1} it suffices to let $r=2\gamma +1$ and take $a(x)=|q(x)|^{1/2}$ and $b(x)=(sgn q(x))|q(x)|^{1/2}$, where $sgn q(x) = q(x) / |q(x)|$ of $q(x)\neq 0$ and $sgn q(x) = 0$ if $q(x)=0$. 
\end{proof}
\begin{rem} \label{rem:rem1}
For the self-adjoint case can be occurred only negative eigenvalues of $H$, and thus \eqref{eq:cor2} becomes
\begin{equation} \label{eq:rem1}
|\lambda|^{\gamma} \leq \left( \frac{2\gamma -1}{2\gamma +1}\right)^{\gamma -1/2} \int_{0}^{\infty} |q(x)|^{\gamma+1/2} \ud x.
\end{equation}
\end{rem}
Similar estimates for whole-line operators were pointed out in \cite{keller} or \cite{lieb-thirring}. For related results and discussion in other contexts see also \cite{davies-nath}, \cite{frank-laptev-lieb-seiringer}, \cite{frank-laptev-seiringer} and \cite{laptev-safronov}.

Estimates involving decaying potentials can be also obtained directly from the general results. So, if we take in \eqref{eq:cor1} $a(x) = (1+x)^{-\tau}$ and $b(x)=(1+x)^{\tau}q(x)$, we obtain the following result.
\begin{cor} \label{cor:cor3}
Suppose $(1+x)^{\tau} q \in L_{r}(\reals_{+})$ with $\tau r >1$ and $r$ as in Corollary \ref{cor:cor1}. Then any eigenvalue $\lambda \in \complex \setminus \reals_{+}$ of the operator $H$, considered acting in $L_{p}(\reals_{+})$, satisfies
\begin{equation} \label{eq:cor3}
|\lambda|^{r-1} \leq \frac{1}{\tau r - 1} \left( \frac{r}{r-2} \sin \frac{\theta}{2} \right)^{2-r} \int_{0}^{\infty} |(1+x)^{\tau} q(x)|^{r} \ud x.
\end{equation}
\end{cor}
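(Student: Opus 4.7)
The plan is to derive Corollary \ref{cor:cor3} as a direct specialization of Corollary \ref{cor:cor1} using the suggested factorization $q = ab$ with $a(x) = (1+x)^{-\tau}$ and $b(x) = (1+x)^{\tau} q(x)$. Clearly $ab = q$ pointwise, so it suffices to verify that $a \in L_r(\reals_+)$ and $b \in L_r(\reals_+)$ under the stated hypothesis $(1+x)^\tau q \in L_r(\reals_+)$, and then substitute the resulting norms into \eqref{eq:cor1}.

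First I would compute $\|a\|_r$. Since $\tau r > 1$, the change of variable $u = 1+x$ gives
$$\|a\|_r^r = \int_0^\infty (1+x)^{-\tau r}\,\ud x = \int_1^\infty u^{-\tau r}\,\ud u = \frac{1}{\tau r - 1},$$
which shows $a \in L_r(\reals_+)$ with an explicit norm. Next, by definition of $b$,
$$\|b\|_r^r = \int_0^\infty |(1+x)^\tau q(x)|^r \ud x,$$
which is finite exactly by the assumption $(1+x)^\tau q \in L_r(\reals_+)$. The hypothesis on $r$ (namely $r>2$ if $1<p\le 2$ and $p\le r\le \infty$ if $p>2$) is inherited verbatim from Corollary \ref{cor:cor1}, so all assumptions of that corollary are met.

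Finally, I would invoke \eqref{eq:cor1} with this choice of $a, b$ to obtain
$$|\lambda|^{r-1} \le \left(\frac{r}{r-2}\sin\frac{\theta}{2}\right)^{2-r} \|a\|_r^r \|b\|_r^r = \frac{1}{\tau r - 1}\left(\frac{r}{r-2}\sin\frac{\theta}{2}\right)^{2-r}\int_0^\infty |(1+x)^\tau q(x)|^r \ud x,$$
which is exactly \eqref{eq:cor3}. There is no genuine obstacle here; the only technicality is ensuring the integrability $\tau r > 1$ needed for $\|a\|_r$ to be finite, which is explicitly assumed. The derivation is therefore a one-line computation once the factorization is fixed.
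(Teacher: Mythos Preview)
Your proof is correct and follows exactly the approach indicated in the paper: the factorization $a(x)=(1+x)^{-\tau}$, $b(x)=(1+x)^{\tau}q(x)$ is substituted into Corollary~\ref{cor:cor1}, and the explicit value $\|a\|_r^r = (\tau r - 1)^{-1}$ (valid since $\tau r > 1$) together with $\|b\|_r^r = \int_0^\infty |(1+x)^\tau q(x)|^r\,\ud x$ yields \eqref{eq:cor3} immediately.
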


The following is also a simple consequence of above general results.
\begin{cor} \label{cor:cor4}
Let $r$ and $p$ be as in Corollary \ref{cor:cor1}, and suppose $e^{\tau x}q \in L_{r}(\reals_{+})$ for $\tau>0$. Then, any eigenvalue $\lambda \in \complex \setminus \reals_{+}$ of the operator $H$, considered acting in $L_{p}(\reals_{+})$ satisfies
\begin{equation} \label{eq:cor4}
|\lambda|^{r-1} \leq \frac{1}{\tau r} \left( \frac{r}{r-2} \sin \frac{\theta}{2} \right)^{2-r} \int_{0}^{\infty} e^{\tau r x} |q(x)|^{r} \ud x.
\end{equation}
\end{cor}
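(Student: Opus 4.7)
The plan is to reduce directly to Corollary \ref{cor:cor1} by choosing a factorization $q = ab$ that absorbs the exponential weight, mirroring exactly the strategy used for Corollary \ref{cor:cor3} with the polynomial weight $(1+x)^{\tau}$. Specifically, I would set
$$a(x) = e^{-\tau x}, \qquad b(x) = e^{\tau x} q(x),$$
so that $ab = q$ and, by hypothesis, $b \in L_{r}(\reals_{+})$. The admissibility conditions on $r$ and $p$ are inherited verbatim from Corollary \ref{cor:cor1}, so no additional bookkeeping is needed there.

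Next I would compute the two $L_{r}$-norms that enter the right-hand side of \eqref{eq:cor1}. The norm of $b$ is immediate from the assumption:
$$\| b \|_{r}^{r} = \int_{0}^{\infty} e^{\tau r x} |q(x)|^{r} \ud x.$$
The norm of $a$ is the elementary exponential integral
$$\| a \|_{r}^{r} = \int_{0}^{\infty} e^{-\tau r x} \ud x = \frac{1}{\tau r},$$
which uses only that $\tau r > 0$ (a condition already built into the hypothesis $\tau > 0$, together with $r > 2$ from Corollary \ref{cor:cor1}).

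Plugging these two quantities into \eqref{eq:cor1} then yields
$$|\lambda|^{r-1} \leq \Bigl( \tfrac{r}{r-2} \sin \tfrac{\theta}{2} \Bigr)^{2-r} \cdot \frac{1}{\tau r} \int_{0}^{\infty} e^{\tau r x} |q(x)|^{r} \ud x,$$
which is exactly \eqref{eq:cor4}. There is no genuine obstacle here: the whole corollary is a one-line specialization, and the only thing to verify is that the chosen $a$ indeed lies in $L_{r}(\reals_{+})$, which is automatic since $\tau r > 0$. The proof is thus essentially a substitution, with the real work already carried out in Theorem \ref{thm:result1} and its corollary.
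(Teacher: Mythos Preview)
Your argument is correct and is precisely the intended one: the paper presents this corollary as ``a simple consequence of above general results,'' and the factorization $a(x)=e^{-\tau x}$, $b(x)=e^{\tau x}q(x)$ together with $\|a\|_r^r=1/(\tau r)$ is exactly the specialization of Corollary~\ref{cor:cor1} it has in mind, parallel to the polynomial-weight case in Corollary~\ref{cor:cor3}.
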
 

For the extremal case $\alpha = \infty$ we have 
$$\sup_{0<x<\infty} \| k(x, \cdot; \lambda)\|_{\infty} = \sup_{0<x,y< \infty} \frac{1}{2 |\mu|} | e^{i \mu |x-y|} - e^{i \mu (x+y)}|.$$
This supremum has been computed in \cite{frank-laptev-seiringer} (see \cite{frank-laptev-seiringer}, proof of Theorem 1.1, and also Lemma 1.3). It turns out that
$$\sup_{0<x, y<\infty} | e^{i\mu |x-y|} - e^{i \mu (x+y)}| = g(\cot (\theta/2)), \quad \theta=\arg \lambda,$$
where
$$g(a):= \sup_{0<x, y<\infty} |e^{i a y} - e^{-y}|$$
($g$ is an even function: $g(-a)=g(a)$).
Let us show this fact for the sake of completeness. It follows from the following simple relations:
$$\sup_{0<x, y < \infty} |e^{i \mu |x-y|} - e^{i \mu (x+y)}| = \sup_{0<y<x} |e^{i \mu (x-y)} - e^{i \mu (x+y)}| =$$
$$= \sup_{y>0} |1- e^{2i\mu y}| = \sup_{y>0} |e^{-i(\cot(\theta/2))y} - e^{-y}| = g(\cot(\theta/2)).$$
Thus,
$$\sup \| k(x, \cdot; \lambda) \|_{\infty} = \frac{1}{2|\mu|}g(\cot(\theta/2)),$$
and, therefore, the resolvent operator $R(\lambda; H_{0})$ is bounded from $L_{1}(\reals_{+})$ to $L_{\infty}(\reals_{+})$, and 
\begin{equation}  \label{eq:norm res}
\| R(\lambda; H_{0})v \|_{\infty} \leq \frac{1}{2|\mu|} g(\cot (\theta/2)) \| v \|_{1}.
\end{equation}
In this case it should be taken $\beta = 1$, $\gamma = \infty$, then \eqref{eq:norm a} and \eqref{eq:norm b} held for $r=p^{'}$ and $s=p$ that together with \eqref{eq:norm res} implies
$$\| BR(\lambda; H_{0}) Au \|_{p} \leq \frac{1}{2|\mu|} g(\cot (\theta/2))\| a \|_{p'} \| b \|_{p} \| u \|_{p}.$$
Therefore, for any eigenvalue $\lambda \in \complex \setminus \reals_{+}$ of the operator $H_{0}$, it should be held (changed $a$ with $b$)
\begin{equation}  \label{eq:a with b}
|\lambda|^{1/2} \leq \frac{1}{2} g(\cot(\theta/2)) \| a \|_{p} \| b \|_{p'}.
\end{equation}

Thus, there holds the following result.
\begin{thm} \label{thm:result2}
Let $1<p<\infty$, and let $q=ab$ with $a\in L_{p}(\reals_{+})$ and $b\in L_{p'}(\reals_{+})$. Then any eigenvalue $\lambda \in \complex \setminus \reals_{+}$ of the operator $H$, considered acting in $L_{p}(\reals_{+})$, satisfies \eqref{eq:a with b}.
\end{thm}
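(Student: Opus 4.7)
The plan is to run the argument of Theorem \ref{thm:result1} in the extremal case $\alpha = \infty$, i.e., $r^{-1} + s^{-1} = 1$. Concretely, I would set $r = p'$ and $s = p$, so that H\"older's inequality gives \eqref{eq:norm a} with $\beta = 1$ (the operator $A$ then sends $L_p(\reals_+)$ into $L_1(\reals_+)$ with norm $\|a\|_{p'}$) and \eqref{eq:norm b} with $\gamma = \infty$ (the operator $B$ sends $L_\infty(\reals_+)$ into $L_p(\reals_+)$ with norm $\|b\|_p$). The problem is therefore reduced to bounding $R(\lambda;H_0)$ as an operator from $L_1(\reals_+)$ to $L_\infty(\reals_+)$.

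The natural bound is
$$\| R(\lambda;H_0) v \|_\infty \leq \sup_{0<x<\infty} \| k(x,\cdot;\lambda) \|_\infty \, \| v \|_1,$$
so the main task is to compute (or at least bound) $\sup_{x,y>0} \tfrac{1}{2|\mu|}|e^{i\mu|x-y|} - e^{i\mu(x+y)}|$. Here the $L^\alpha$-estimate of $k(x,\cdot;\lambda)$ used in the proof of Theorem \ref{thm:result1} is no longer available, so one must analyze this supremum directly. By symmetry it suffices to take $y \leq x$, and the elementary factorization $|e^{i\mu(x-y)} - e^{i\mu(x+y)}| = e^{-(\impart\mu)(x-y)}|1 - e^{2i\mu y}|$ shows that the sup is attained (in the limit) as $x \downarrow y$, reducing matters to $\sup_{y>0} |1 - e^{2i\mu y}|$. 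Writing $\mu = |\mu| e^{i\theta/2}$ and changing variable, this in turn rewrites as $g(\cot(\theta/2))$ with the function $g$ from \eqref{eq:lambda2}. This identification is exactly the computation already carried out in \cite{frank-laptev-seiringer}, and is the step I expect to be the most delicate.

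Putting the three pieces together gives
$$\| B R(\lambda;H_0) A u \|_p \leq \frac{1}{2|\mu|}\, g(\cot(\theta/2))\, \|a\|_{p'}\|b\|_p\, \|u\|_p,$$
so $\|B R(\lambda;H_0) A\| \leq \tfrac{1}{2|\mu|} g(\cot(\theta/2)) \|a\|_{p'}\|b\|_p$. Finally, for any eigenvalue $\lambda \in \complex \setminus \reals_+$ of $H$, the Birman--Schwinger principle (used implicitly throughout the preceding section) forces $\|B R(\lambda;H_0) A\| \geq 1$, hence
$$2|\mu| \leq g(\cot(\theta/2))\, \|a\|_{p'}\|b\|_p.$$
Since the factorization $q = ab$ is symmetric and $a, b$ play interchangeable roles, I can rename them (swap the factorization) to obtain the stated form \eqref{eq:a with b}, namely $|\lambda|^{1/2} \leq \tfrac{1}{2} g(\cot(\theta/2)) \|a\|_p \|b\|_{p'}$, completing the proof.
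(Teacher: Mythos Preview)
Your proposal is correct and follows essentially the same route as the paper: set $\beta=1$, $\gamma=\infty$ (equivalently $r=p'$, $s=p$), bound $R(\lambda;H_0):L_1\to L_\infty$ by $\sup_{x,y}|k(x,y;\lambda)|$, reduce that supremum to $\sup_{y>0}|1-e^{2i\mu y}|=g(\cot(\theta/2))$ exactly as in \cite{frank-laptev-seiringer}, and then swap the roles of $a$ and $b$ to obtain \eqref{eq:a with b}. The only cosmetic difference is that you spell out the factorization $e^{-(\impart\mu)(x-y)}|1-e^{2i\mu y}|$ explicitly, whereas the paper jumps directly to $\sup_{y>0}|1-e^{2i\mu y}|$.
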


In particular, for the Hilbert space case $p=2$ there holds
\begin{equation}  \label{eq:result2}
|\lambda|^{1/2} \leq \frac{1}{2} g(\cot (\theta/2)) \| a \|_{2} \| b \|_{2}.
\end{equation}
\begin{rem}  \label{rem:rem2}
If in \eqref{eq:result2} it is taken $a(x) = |q(x)|^{1/2}$ and $b(x) = (sgn q(x)) |q(x)|^{1/2}$, then 
$$\| a \|_{2}^{2} = \| b \|_{2}^{2} = \int_{0}^{\infty} |q(x)| \ud x,$$
and estimate \eqref{eq:result2} becomes
$$|\lambda|^{1/2} \leq \frac{1}{2} g(\cot (\theta/2)) \int_{0}^{\infty} |q(x)| \ud x.$$
\end{rem}
This result was established in \cite{frank-laptev-seiringer} (it is presented in Theorem 1.1 \cite{frank-laptev-seiringer} as the main result).
\begin{rem}  \label{rem:rem3}
From results presented in Theorem \ref{thm:result2} can be established various special estimates useful for applications. For instance, arguing as in the case of Corollary \ref{cor:cor3}, it can be derived the following estimate
$$|\lambda|^{1/2} \leq \frac{1}{2} g(\cot(\theta/2))(p'\tau - 1)^{-1/p'} \| (1+x)^{\tau}q \|_{p}$$
provided that $p'\tau > 1$ and $(1+x)^{\tau}q \in L_{p}(\reals_{+})$.
\end{rem}

2. By applying the same arguments there can be obtained related estimates for eigenvalues of the operator $H= -\ud^{2}/\ud x^{2} + q(x)$ considered with general boundary conditions like $u'(0) = \sigma u(0)$ ($0\leq \sigma < \infty$; in case $\sigma = \infty$ it is taken the Dirichlet condition; $\sigma = 0$ corresponds to the Neumann boundary condition $u'(0)=0$). We attach to this general situation all conventions made above for the Dirichlet boundary condition case concerning the exact definition of the perturbed operator $H$. In what follows, the corresponding perturbed operator is denoted by $H_{\sigma}$ (it will be no confusion with the notation $H_{0}$ used as unperturbed operator and the operator $H$ corresponding to the Neumann boundary condition case $\sigma = 0$). In order to apply the arguments used above, we first note that the resolvent operator of the unperturbed operator (with general boundary conditions) is an integral operator with the kernel
$$ k_{\sigma} (x, y; \lambda) = - \frac{1}{2\mu} \left( e^{i \mu |x-y|} - \frac{\sigma+i\mu}{\sigma- i\mu} e^{i \mu (x+y)} \right), \quad 0<x, y< \infty.$$

Next, we take $\alpha$, $1\leq \alpha < \infty$, and observe that
$$\sup_{0<x<\infty} \| k_{\sigma} (x, \cdot; \lambda) \|_{\alpha} \leq 1/|\mu| (\alpha \impart \mu)^{1/\alpha}.$$
We take $\impart \mu > 0$ and then $|(\sigma + i \mu) / (\sigma - i \mu)^{-1}|\leq 1$, and arguments similar to that used in proving the estimation for the Dirichlet boundary condition case (when $\sigma = \infty$) are applied. 

Thus, as is seen, for the operator $H_{\sigma}$, that is, for the case of general boundary conditions, the result given by Theorem \ref{thm:result1} remains valid as well. We formulate the corresponding result in a separate theorem.
\begin{thm} \label{thm:result3}
Under conditions of Theorem \ref{thm:result1} for any eigenvalue $\lambda \in \complex \setminus \reals_{+}$ of the operator $H_{\sigma}$, considered acting in $L_{p}(\reals_{+})$, an estimate like \eqref{eq:lambda} holds true. In particular, for negative eigenvalues $\lambda$ of $H_{\sigma}$, there holds
$$|\lambda|^{1+\alpha} \leq \alpha^{-2} \| a \|_{r}^{2\alpha} \| b \|_{s}^{2\alpha}.$$
\end{thm}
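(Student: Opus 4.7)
The plan is to mimic the Dirichlet-case argument preceding Theorem \ref{thm:result1} word for word, with the only new ingredient being a uniform bound on the reflection coefficient $(\sigma + i\mu)/(\sigma - i\mu)$ appearing in the kernel $k_\sigma(x,y;\lambda)$. Concretely, I would first verify that for $\sigma \geq 0$ and $\impart \mu > 0$ one has
$$\left| \frac{\sigma + i\mu}{\sigma - i\mu} \right| \leq 1.$$
Writing $\mu = \xi + i\eta$ with $\eta > 0$, this reduces to $(\sigma - \eta)^2 + \xi^2 \leq (\sigma + \eta)^2 + \xi^2$, which is immediate from $\sigma \eta \geq 0$. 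This single estimate is the only point where the Dirichlet analysis needs to be revisited, and it is the step I expect to be essentially the whole content of the extension.

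With that bound in hand, I would estimate $\|k_\sigma(x,\cdot;\lambda)\|_\alpha$ by splitting $e^{i\mu|x-y|}$ and $\frac{\sigma+i\mu}{\sigma-i\mu}e^{i\mu(x+y)}$ and applying the triangle inequality in $L^\alpha$, exactly as in the Dirichlet derivation of \eqref{eq:resker}. The second piece is now majorized by $e^{-(\impart \mu)(x+y)}$ (since the prefactor has modulus at most $1$), so the two $L^\alpha$ integrals reproduce precisely the same quantities computed in the Dirichlet case, and one obtains
$$\sup_{0 < x < \infty} \| k_\sigma(x,\cdot;\lambda) \|_\alpha \leq \frac{1}{|\mu|(\alpha \impart \mu)^{1/\alpha}},$$
together with the symmetric bound for $\sup_{0<y<\infty}\|k_\sigma(\cdot,y;\lambda)\|_\alpha$.

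From here the remainder of the argument is purely a transcription of the Dirichlet proof. I would apply Minkowski's and Hölder's inequalities to conclude that $R(\lambda;H_{0,\sigma})$ is bounded from $L_1$ to $L_\alpha$ and from $L_{\alpha'}$ to $L_\infty$, each with norm at most $1/|\mu|(\alpha \impart \mu)^{1/\alpha}$; Riesz--Thorin interpolation then transfers this to a bound from $L_\beta$ to $L_\gamma$ under the relations in \eqref{eq:norm a} and \eqref{eq:norm b}, yielding
$$\| B R(\lambda;H_{0,\sigma}) A u \|_p \leq \frac{1}{|\mu|(\alpha \impart \mu)^{1/\alpha}} \| a \|_r \| b \|_s \| u \|_p.$$
Since any eigenvalue $\lambda \in \complex \setminus \reals_+$ of $H_\sigma$ forces the norm of the bordered resolvent to be at least $1$ (this is the Birman--Schwinger style observation implicit in the Dirichlet argument), rewriting $\mu = |\lambda|^{1/2} e^{i\theta/2}$ with $\impart \mu = |\lambda|^{1/2} \sin(\theta/2)$ produces the required analogue of \eqref{eq:lambda}.

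Finally, the particular statement for negative eigenvalues follows by specializing $\theta = \pi$, so that $\sin(\theta/2) = 1$ and the factor $(\alpha \sin(\theta/2))^{-2}$ collapses to $\alpha^{-2}$. I do not foresee any genuine obstacle beyond the reflection coefficient estimate; the rest is a verbatim repetition of the Dirichlet machinery, which is why it is natural to package the result as an immediate corollary of the previous proof rather than rework it in full detail.
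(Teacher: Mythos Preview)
Your proposal is correct and matches the paper's approach almost exactly: the paper's entire proof of Theorem~\ref{thm:result3} is to note that $|(\sigma+i\mu)/(\sigma-i\mu)|\le 1$ for $\impart\mu>0$ and then declare that ``arguments similar to that used in proving the estimation for the Dirichlet boundary condition case are applied.'' You have simply written out those similar arguments (and the elementary verification of the reflection-coefficient bound) in full, together with the $\theta=\pi$ specialization for the negative-eigenvalue statement.
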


For the extremal case $\alpha= \infty$ we have
$$\sup_{0<x<\infty} \| k_{\sigma}(x, \cdot; \lambda) \|_{\infty} = \frac{1}{2|\mu|} g_{\sigma} (\cot (\theta/2)),$$
$g_{\sigma}$ instead of $g$, where
$$g_{\sigma} (a):= \sup_{0<x<\infty} \left| e^{iay} - \frac{\sigma+ i \mu}{\sigma -i\mu} e^{-y}\right|, \quad a\in \reals.$$

Accordingly, the following result holds true.
\begin{thm} \label{thm:result4}
Under the conditions of Theorem \ref{thm:result2} for any eigenvalue $\lambda \in \complex \setminus \reals_{+}$ of the operator $H_{\sigma}$, considered acting in $L_{p}(\reals_{+})$, there holds
$$|\lambda|^{1/2}\leq \frac{1}{2} g_{\sigma}(\cot(\theta/2)) \| a \|_{p} \| b \|_{p}.$$
\end{thm}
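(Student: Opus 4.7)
The plan is to imitate the argument for Theorem \ref{thm:result2} verbatim, replacing the Dirichlet kernel $k(x,y;\lambda)$ by the kernel $k_\sigma(x,y;\lambda)$ of $R(\lambda;H_\sigma)$ that was recorded just before Theorem \ref{thm:result3}. The only analytical ingredient that actually needs to be redone is the computation of $\sup_{x,y>0}|k_\sigma(x,y;\lambda)|$; once that identification is in hand, the rest of the proof transfers mechanically from the Dirichlet situation treated in Theorem \ref{thm:result2}.

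I would compute the kernel supremum by following the same chain of equalities used to identify $\sup_{x,y}|e^{i\mu|x-y|}-e^{i\mu(x+y)}|$ with $g(\cot(\theta/2))$ in the Dirichlet case. Writing $c=(\sigma+i\mu)/(\sigma-i\mu)$, I would restrict to $0<y<x$ (so $|x-y|=x-y$), factor $e^{i\mu x}$ out of both terms (whose modulus $e^{-x\impart\mu}$ is maximized as $x\downarrow 0$), and substitute $\mu=|\mu|e^{i\theta/2}$ to convert the remaining exponentials into the form defining $g_\sigma$. This yields
$$\sup_{0<x,y<\infty}\bigl|e^{i\mu|x-y|}-c\,e^{i\mu(x+y)}\bigr| \;=\; g_\sigma(\cot(\theta/2)),$$
and hence the $L_1\to L_\infty$ bound $\|R(\lambda;H_\sigma)v\|_\infty\leq\tfrac{1}{2|\mu|}g_\sigma(\cot(\theta/2))\,\|v\|_1$. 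The estimate $|c|\leq 1$ for $\impart\mu>0$ (already invoked in the paragraph preceding Theorem \ref{thm:result3}) is what keeps the Dirichlet chain of equalities valid in the presence of the extra phase factor.

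Bordering with the Hölder estimates \eqref{eq:norm a}, \eqref{eq:norm b} at the extremal values $\beta=1$, $\gamma=\infty$ (which force $r=p'$, $s=p$), together with the $a\leftrightarrow b$ swap permitted by $q=ab=ba$, I obtain
$$\|BR(\lambda;H_\sigma)Au\|_p \;\leq\; \tfrac{1}{2|\mu|}\,g_\sigma(\cot(\theta/2))\,\|a\|_p\|b\|_{p'}\|u\|_p.$$
For any eigenvalue $\lambda\in\complex\setminus\reals_+$ of $H_\sigma$ with eigenfunction $\psi$, the identity $\psi=-R(\lambda;H_\sigma)AB\psi$ forces $\|BR(\lambda;H_\sigma)A\|_{L_p\to L_p}\geq 1$, and substituting $\impart\mu=|\mu|\sin(\theta/2)$ together with $|\mu|=|\lambda|^{1/2}$ produces the stated inequality. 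I note that the stated right-hand side carries $\|b\|_p$, whereas the argument naturally yields $\|b\|_{p'}$; since the hypotheses inherited from Theorem \ref{thm:result2} place $b\in L_{p'}(\reals_+)$, I read the statement with $\|b\|_p$ as a typographical slip for $\|b\|_{p'}$.

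The single genuine obstacle is Step~1: tracing through the kernel-supremum computation with the general phase factor $c\neq -1$ (the Dirichlet value), and in particular checking that no interior critical point in $(x,y)$ beats the boundary configuration $x\downarrow 0$, $y>0$ that defines $g_\sigma$. The bound $|c|\leq 1$ is exactly what prevents such a pathology, so I do not anticipate any new phenomenon; everything else is a routine reprise of the Dirichlet argument.
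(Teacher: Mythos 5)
Your proposal is correct and follows essentially the same route as the paper: identify $\sup_{x}\|k_{\sigma}(x,\cdot;\lambda)\|_{\infty}$ with $\tfrac{1}{2|\mu|}g_{\sigma}(\cot(\theta/2))$ using $|(\sigma+i\mu)/(\sigma-i\mu)|\leq 1$ for $\impart\mu>0$, and then rerun the extremal-case ($\beta=1$, $\gamma=\infty$) bordered-resolvent argument of Theorem~\ref{thm:result2} verbatim. Your observation that the stated $\|b\|_{p}$ should read $\|b\|_{p'}$ is consistent with the hypotheses inherited from Theorem~\ref{thm:result2} and with what the argument actually yields.
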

\begin{rem}  \label{rem:rem4}
The result given by Theorem \ref{thm:result4} for the Hilbert space case $p=2$ and $a(x)= |q(x)|^{1/2}$, $b(x)= (sgn q(x)) |q(x)|^{1/2}$ was mentioned in \cite{frank-laptev-seiringer} (see Proposition 1.5 \cite{frank-laptev-seiringer}).
\end{rem}

\section{The case of potentials from weak Lebesgue's spaces}\label{sec:potentials from weak}

Estimates for the perturbed eigenvalues can be obtained under slightly weakened conditions on the potentials. It turns out that it can be involved potentials belonging to weak Lebesgue's spaces. To be more precisely we consider a Schr\"odinger operator $H= - \ud^{2}/ \ud x^{2} + q(x)$, where the potential $q$ is written as a product $q=ab$ with $a\in L_{r, w}(\reals_{+})$ and $b \in L_{s, w}(\reals_{+})$ (we will use $L_{r, w}$ to denote the so-called weak $L_{r}$-spaces). The operator $H$ will be considered acting in the space $L_{p}(\reals_{+})$ ($1<p< \infty$) and subjected with the Dirichlet boundary condition (there will no loss of generality in supposing only the Dirichlet boundary condition).

We recall that the weak $L_{r}$-space $L_{r, w}(\reals_{+})$ ($0<r< \infty$) is the space consisting of all measurable functions on $\reals_{+}$ such that
$$\| f \|_{r, w} := \sup_{t>0} (t^{r} \lambda_{f}(t))^{1/r} < \infty,$$
where $\lambda_{f}$ denotes the distribution function of $f$, namely, $\lambda_{f}(t)= \mu (\{x\in \reals_{+}: |f(x)| > t\}),$ $0<t<\infty$ (here $\mu$ is the standard Lebesgue measure on $\reals_{+}$).
The spaces $L_{r, w}(\reals_{+})$ are special cases of the more general Lorentz spaces $L_{p, r} (\reals_{+})$ which will also be needed. $L_{p, r}(\reals_{+})$ $(0<p, r< \infty)$ is defined as the space of all measurable functions $f$ on $\reals_{+}$ for which
$$\| f \|_{p, r}^{r}:= \int_{0}^{\infty} t^{r}(\lambda_{f}(t))^{r/p} \frac{\ud t}{t} < \infty.$$
Note that $L_{r, r} (\reals_{+}) = L_{r}(\reals_{+})$, and it will be convenient to let $L_{\infty, r}(\reals_{+})= L_{\infty}(\reals_{+})$.

As in previous section we let $A, B$ denote the operators of multiplication by $a, b$, respectively. In view of $a\in L_{r, w}(\reals_{+})$ and $b\in L_{s, w}(\reals_{+})$, as was assumed, we can apply a result of O'Neil \cite{o'neil} due to of which there can be chosen $\beta>0$ and $\gamma>0$ such that the multiplication operator $A$ to be bounded from $L_{p, p}(\reals_{+}) (=L_{p}(\reals_{+}))$ to $L_{\beta, p}(\reals_{+})$ and, respectively, $B$ to be bounded from $L_{\gamma, p}(\reals_{+})$ to $L_{p, p}(\reals_{+})$ and, moreover, 
\begin{equation} \label{eq:norm A}
\| Au \|_{\beta, p} \leq c \| a \|_{r, w} \| u \|_{p}, \quad \beta^{-1} = r^{-1} + p^{-1},
\end{equation}
and
\begin{equation}  \label{eq:norm B}
\| Bv \|_{p} \leq c \| b \|_{s, w} \| v \|_{\gamma, p}, \quad p^{-1} = s^{-1}+\gamma^{-1}.
\end{equation}
Note that in \eqref{eq:norm A} and \eqref{eq:norm B} the constants in general are distinct, but depending only on $r, p$ and $s, p$, respectively.

Next, as was shown, the resolvent operator $R(\lambda; H_{0})$ of unperturbed operator $H_{0}$ acts as a bounded operator from $L_{1}(\reals_{+})$ to $L_{\alpha}(\reals_{+})$ and, simultaneously, from $L_{\alpha'}(\reals_{+})$ to $L_{1}(\reals_{+})$ for any $\alpha$, $1\leq \alpha <\infty$. Besides, in both cases the bound for the norm of $R(\lambda; H_{0})$ does not exceed $1/ |\mu| (\alpha \impart \mu)^{1/ \alpha}$. By applying the interpolation functor $K_{\theta, p}$ with $0<\theta < 1$ (cf., \cite{bergh-lofstrom} or \cite{triebel}), we obtain that $R(\lambda; H_{0})$ acts also as a bounded operator from $L_{\beta, p}(\reals_{+})$ into $L_{\gamma, p}(\reals_{+})$ provided that 
$$\frac{1}{\beta} = \frac{1-\theta}{1} + \frac{\theta}{\alpha'}, \quad \frac{1}{\gamma}=\frac{1-\theta}{\alpha}+ \frac{\theta}{\infty}.$$
Moreover,
\begin{equation}  \label{eq:resolv norm}
\| R(\lambda; H_{0})v \|_{\gamma, p} \leq \left( C/ |\mu| (\alpha \impart \mu)^{1/\alpha}\right) \| v \|_{\beta, p},
\end{equation}
where $C$ is a positive constant depending only on $p, \gamma$ and $\beta$ occurred after interpolation. In view of \eqref{eq:norm A}, \eqref{eq:norm B} and \eqref{eq:resolv norm} we conclude that, under our assumption, there holds
$$\| BR(\lambda; H_{0})Au \|_{p} \leq \left( C/ |\mu| (\alpha \impart \mu)^{1/\alpha} \right) \| a \|_{r, w} \| b \|_{s, w} \| u \|_{p}$$
with a positive constant $C$ depending only on $p, r$ and $s$.

In this manner, we obtain an estimate like \eqref{eq:lambda}, namely
\begin{equation} \label{eq:lambda sin}
|\lambda|^{1+\alpha} \leq C (\alpha \sin(\theta/2))^{-2} \| a \|_{r, w}^{2\alpha} \| b \|_{s, w}^{2\alpha},
\end{equation}
under more weaker conditions on the potential $q$ than those required in Theorem \ref{thm:result1}. In \eqref{eq:lambda sin}, as in \eqref{eq:lambda}, $\alpha= (1-r^{-1} - s^{-1})^{-1}$ with the same restrictions on $r$ and $s$. We have proved the following result.
\begin{thm}  \label{thm:result5}
Let $p, r, s$ be as in Theorem \ref{thm:result1}, and suppose $q=ab$, where $a\in L_{r, w} (\reals_{+})$ and $b\in L_{s, w}(\reals_{+})$. Then, any eigenvalues $\lambda \in \complex \setminus \reals_{+}$ of the operator $H$, considered acting in $L_{p}(\reals_{+})$, satisfies \eqref{eq:lambda sin}.
\end{thm}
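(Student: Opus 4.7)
The plan is to mimic the argument of Theorem~\ref{thm:result1}, replacing the H\"older bounds on the multiplications with their weak/Lorentz-space analogues and replacing the Riesz--Thorin step with a real-interpolation step, so that the final Birman--Schwinger-type inequality for $BR(\lambda;H_0)A$ reappears with essentially the same dependence on $|\mu|$ and $\impart\mu$.

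The first ingredient I would put in place is O'Neil's theorem on products in weak Lebesgue spaces, which supplies the two multiplication bounds \eqref{eq:norm A} and \eqref{eq:norm B}: $A: L_p(\reals_+)\to L_{\beta,p}(\reals_+)$ with norm $\leq c\|a\|_{r,w}$ and $B: L_{\gamma,p}(\reals_+)\to L_p(\reals_+)$ with norm $\leq c\|b\|_{s,w}$, where $\beta^{-1}=r^{-1}+p^{-1}$ and $\gamma^{-1}=p^{-1}-s^{-1}$. These play exactly the role previously played by \eqref{eq:norm a} and \eqref{eq:norm b}. The second ingredient is the pair of endpoint resolvent bounds already derived in Section~\ref{sec:lebesgue}: for every $\alpha\in[1,\infty)$, $R(\lambda;H_0)$ maps $L_1(\reals_+)$ to $L_\alpha(\reals_+)$ and simultaneously $L_{\alpha'}(\reals_+)$ to $L_\infty(\reals_+)$, with the same operator norm $1/|\mu|(\alpha\impart\mu)^{1/\alpha}$.

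Now instead of invoking Riesz--Thorin I would apply the real interpolation functor $K_{\theta,p}$ with $0<\theta<1$ to the pairs $(L_1,L_{\alpha'})$ and $(L_\alpha,L_\infty)$. Using the standard identification $(L_{p_0},L_{p_1})_{\theta,p}=L_{q,p}$ with $q^{-1}=(1-\theta)/p_0+\theta/p_1$, this yields \eqref{eq:resolv norm}, i.e.\ $R(\lambda;H_0):L_{\beta,p}(\reals_+)\to L_{\gamma,p}(\reals_+)$ with norm at most $C/|\mu|(\alpha\impart\mu)^{1/\alpha}$, where the exponents must satisfy $\beta^{-1}=(1-\theta)+\theta/\alpha'$ and $\gamma^{-1}=(1-\theta)/\alpha$. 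Eliminating $\theta$ produces $\alpha^{-1}+\beta^{-1}=\gamma^{-1}+1$, and combining this with the O'Neil exponents forces $\alpha=(1-r^{-1}-s^{-1})^{-1}$, exactly as in Theorem~\ref{thm:result1}; the assumption $r^{-1}+s^{-1}<1$ with $p\leq s$ keeps $\alpha\in[1,\infty)$ admissible. Composing the three maps gives
$$\|BR(\lambda;H_0)Au\|_p\leq \frac{C}{|\mu|(\alpha\impart\mu)^{1/\alpha}}\|a\|_{r,w}\|b\|_{s,w}\|u\|_p.$$
For $\lambda\in\complex\setminus\reals_+$ to be an eigenvalue of $H$, the standard Birman--Schwinger argument forces this operator norm to be at least $1$, whence $|\mu|(\alpha\impart\mu)^{1/\alpha}\leq C\|a\|_{r,w}\|b\|_{s,w}$; writing $\lambda=|\lambda|e^{i\theta}$ so that $|\mu|=|\lambda|^{1/2}$ and $\impart\mu=|\lambda|^{1/2}\sin(\theta/2)$ and raising to the power $2\alpha$ delivers \eqref{eq:lambda sin}.

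The one delicate step is the interpolation: one must be sure that the real method produces precisely the Lorentz spaces $L_{\beta,p}$ and $L_{\gamma,p}$ appearing in \eqref{eq:norm A}--\eqref{eq:norm B}, with second index equal to $p$. This is why the parameter in $K_{\theta,p}$ must be taken to match the $L_p(\reals_+)$ source/target of $A$ and $B$ rather than any exponent dictated by the resolvent's endpoint spaces, and it is exactly this matching that makes real interpolation preferable to Riesz--Thorin here. Beyond that, the resolvent estimates, the exponent bookkeeping and the passage from bordered norm to eigenvalue inequality are all transcribed line for line from Section~\ref{sec:lebesgue}.
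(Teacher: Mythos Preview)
Your proposal is correct and follows essentially the same route as the paper: O'Neil's multiplication bounds \eqref{eq:norm A}--\eqref{eq:norm B}, the two endpoint resolvent estimates from Section~\ref{sec:lebesgue}, real interpolation via $K_{\theta,p}$ to land in the Lorentz spaces $L_{\beta,p}$ and $L_{\gamma,p}$, and then the same exponent bookkeeping and Birman--Schwinger step as in Theorem~\ref{thm:result1}. Your closing remark about why the second Lorentz index must be $p$ (so that the interpolated resolvent matches the O'Neil multiplication spaces) is a point the paper leaves implicit.
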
 

Similarly as in previous section diverse estimates for eigenvalues useful in applications can be derived from general result given by Theorem \ref{thm:result5}. Here we restrict ourselves to remark only  a version accommodated for potentials from weak Lebesgue's classes of the result presented in Corollary \ref{cor:cor2}.
\begin{cor}  \label{cor:cor5}
Let $2\gamma > 1$ if $1<p \leq 2$ and $2\gamma > p-1$ if $p>2$, and suppose $q\in L_{\gamma+ 1/2, w}(\reals_{+})$. Then any eigenvalue $\lambda \in \complex \setminus \reals_{+}$ of the operator $H$, considered acting in $L_{p}(\reals_{+})$, satisfies
\begin{equation}  \label{eq:cor5}
|\lambda|^{\gamma} \leq C \sup_{t>0} \left(  t^{\gamma + 1/2} \lambda_{q} (t) \right), 
\end{equation}
where $C= C(p, \gamma, \theta)$ is a positive constant depending only on $p, \gamma$ and $\theta$ ($\theta = \arg \lambda$). 
\end{cor}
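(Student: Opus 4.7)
The plan is to imitate the derivation of Corollary \ref{cor:cor2} from Corollary \ref{cor:cor1}, but with Theorem \ref{thm:result5} in place of Theorem \ref{thm:result1}. Explicitly, I would apply Theorem \ref{thm:result5} to the symmetric factorization $a(x)=|q(x)|^{1/2}$ and $b(x)=(\operatorname{sgn} q(x))|q(x)|^{1/2}$ with $r=s=2\gamma+1$.

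First I would verify admissibility of this choice. The constraint $r^{-1}+s^{-1}<1$ becomes $2/(2\gamma+1)<1$, i.e.\ $2\gamma>1$, while $p\leq s$ becomes $p\leq 2\gamma+1$, i.e.\ $2\gamma\geq p-1$; both are guaranteed by the hypotheses of the corollary (for $1<p\leq 2$ one has $p-1\leq 1<2\gamma$ automatically, and for $p>2$ the assumption $2\gamma>p-1$ is built in). The corresponding interpolation exponent in Theorem \ref{thm:result5} is
$$\alpha=(1-r^{-1}-s^{-1})^{-1}=\frac{2\gamma+1}{2\gamma-1}, \qquad 1+\alpha=\frac{4\gamma}{2\gamma-1}.$$

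Next I would translate the weak-$L_{r}$ norms of $a$ and $b$ into a quantity involving only $\lambda_{q}$. Since $|a|=|b|=|q|^{1/2}$, one has $\lambda_{a}(t)=\lambda_{b}(t)=\lambda_{q}(t^{2})$, and the substitution $u=t^{2}$ in the definition of $\|\cdot\|_{r,w}$ yields
$$\| a \|_{r, w}^{r}=\| b \|_{r, w}^{r}=\sup_{t>0} t^{r}\lambda_{q}(t^{2})=\sup_{u>0} u^{r/2}\lambda_{q}(u).$$
For $r=2\gamma+1$ this reduces to $\| a \|_{r, w}^{2\gamma+1}=\| b \|_{r, w}^{2\gamma+1}=\sup_{t>0} t^{\gamma+1/2}\lambda_{q}(t)$.

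Finally, I would substitute these identities into the estimate \eqref{eq:lambda sin} of Theorem \ref{thm:result5}. The exponent relation $2\alpha/(2\gamma+1)=2/(2\gamma-1)$ makes the right-hand side a constant multiple of $(\sup_{t>0} t^{\gamma+1/2}\lambda_{q}(t))^{4/(2\gamma-1)}$, while the left-hand side is $|\lambda|^{4\gamma/(2\gamma-1)}$. Raising to the power $(2\gamma-1)/4$ then gives exactly \eqref{eq:cor5}, with the constant $C(p,\gamma,\theta)$ absorbing the interpolation constant from Theorem \ref{thm:result5} together with the factor $(\alpha\sin(\theta/2))^{-(2\gamma-1)/2}$. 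There is no genuine analytic obstacle here: all the hard work is already encoded in Theorem \ref{thm:result5}, so the task is essentially one of bookkeeping the exponents and verifying that the two regimes on $p$ in the hypotheses align with the admissibility range of Theorem \ref{thm:result5}.
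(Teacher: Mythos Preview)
Your proposal is correct and follows exactly the route the paper indicates: the text introduces Corollary~\ref{cor:cor5} as the weak-space analogue of Corollary~\ref{cor:cor2}, and the latter is proved precisely by choosing $a=|q|^{1/2}$, $b=(\operatorname{sgn} q)|q|^{1/2}$ and $r=s=2\gamma+1$ in the corresponding theorem. Your bookkeeping of $\alpha$, of the identity $\|a\|_{r,w}^{r}=\sup_{u>0}u^{\gamma+1/2}\lambda_q(u)$, and of the final exponent $(2\gamma-1)/4$ is accurate, so nothing is missing.
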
 
\begin{rem}  \label{rem:rem5}
In \eqref{eq:cor5} the value of $C$ can be controlled.
\end{rem}


\section*{Acknowledgments}

The author wishes to express her gratitudes to Professor Ari Laptev for formulating the problem and for many useful discussions.

\bibliography{estimates-on-half-line}
\bibliographystyle{alpha}

\end{document}